\newtheorem{theorem}{Theorem}[section]
\newtheorem*{Theorem}{Theorem}
\newtheorem{lemma}[theorem]{Lemma}
\newtheorem{proposition}[theorem]{Proposition}
\theoremstyle{definition}
\newtheorem{definition}[theorem]{Definition}
\newtheorem{remark}[theorem]{Remark}
\newtheorem{example}[theorem]{Example}
\newcommand{\RR}{\mathbb{R}} \newcommand{\TT}{\mathbb{T}}  \newcommand{\ZZ}{\mathbb{Z}}   \newcommand{\QQ}{\mathbb{Q}}  \newcommand{\NN}{\mathbb{N}}  \newcommand{\II}{\mathbb{I}} \newcommand{\JJ}{\mathbb{J}}
\newcommand{\Do}{\mathcal{D}} \newcommand{\Po}{\mathcal{P}}  \newcommand{\G}{\mathcal{G}^{(n)}}
\begin{document}

\title{Minoration of the complexity function associated to a translation on the torus.}

\author{Nicolas Bédaride \footnote{LATP  UMR 7353, Universit\'e Aix Marseille, 
39 rue Joliot Curie, 
13453 Marseille cedex 13, France.
{\it E-mail address:} {\tt nicolas.bedaride@univ-amu.fr}}
\& Jean-Fran\c cois Bertazzon\footnote{LATP  UMR 7353, {\it E-mail address:} {\tt jeffbertazzon@gmail.com}}}

\date{}

\makeatletter
\renewenvironment{thebibliography}[1]
     {\section*{\refname}%
      \@mkboth{\MakeUppercase\refname}{\MakeUppercase\refname}%
      \list{\@biblabel{\@arabic\c@enumiv}}%
           {\settowidth\labelwidth{\@biblabel{#1}}%
            \leftmargin\labelwidth
            \advance\leftmargin\labelsep
            \@openbib@code
            \usecounter{enumiv}%
            \let\p@enumiv\@empty
            \itemsep=0pt
            \parsep=0pt
            \leftmargin=\parindent
            \itemindent=-\parindent
            \renewcommand\theenumiv{\@arabic\c@enumiv}}%
      \sloppy
      \clubpenalty4000
      \@clubpenalty \clubpenalty
      \widowpenalty4000%
      \sfcode`\.\@m}
     {\def\@noitemerr
       {\@latex@warning{Empty `thebibliography' environment}}%
      \endlist}
\makeatother

\maketitle

%%%%%%%%%%%%%%%%%%%%%%%%%%%%%%%%%%%%%%%%%%%%%%%%%%%%%%%%%%%%%%%%%%%%%%%%%%%%%%%%
\begin{abstract}
We show that the word complexity function $p_k(n)$ of a piecewise translation map associated to a minimal translation on the torus $\TT^k = \RR^k / \ZZ^k$ is at least $kn+1$ for every integer $n$.
\end{abstract}
%%%%%%%%%%%%%%%%%%%%%%%%%%%%%%%%%%%%%%%%%%%%%%%%%%%%%%%%%%%%%%%%%%%%%%%%%%%%%%%%

\section{Introduction}

Symbolic dynamics is a part of dynamics which studies the interaction between dynamical systems and combinatorial properties of sequences defined over a finite number of symbols. The collection of these symbols is called {\it the alphabet}. Here we are interested in a minimal translation on the torus $\TT^k$, {\it i.e} every point has a dense orbit under the action of the translation. We consider a finite partition of the torus, and the coding of a translation related to this partition. An orbit is coded by a one-sided infinite sequence over a finite alphabet, the sequence is called {\it an infinite word}. The complexity of an infinite word is a function defined over integers $n\in\mathbb{N}$: it is the number of different words of $n$ digits which appear in such a sequence. For a minimal dynamical system, when the partition consists of "reasonable" sets, the complexity does not depend on the orbit. For a given dynamical system, the complexity function depends on the partition. First let us recall some usual facts about translations on a torus. To define different partitions of a torus, we will proceed as follow: we consider a subset $\mathcal{D}$ of $\mathbb{R}^k$ which tiles the plane by translations. We call it a {\it fundamental domain} of the torus. Then a translation on the torus can be seen as a bijective map defined on the set $\mathcal{D}$.
Thus a partition of the torus gives a partition of $\mathcal{D}$, and the translation gives a map piecewise defined on each subset of $\mathcal{D}$.  This map is called a {\it piecewise translation associated to a torus}, see Section 2 for some examples.
Here we search partitions which minimize the complexity function for a minimal translation. In doing so, we are looking for sets $\mathcal{D}$ and partitions of this set such that the piecewise translation coded by this partition has the minimal complexity.

The case of the circle (identified to $[0,1]$) is known since the work of Hedlund and Morse: 
a translation by angle $\alpha$ can be coded by the intervals $[0,1-\alpha)$ and $[1-\alpha,1)$. Then the complexity function is equal to $n+1$ if $\alpha\notin \mathbb{Q}$, see \cite{He.Mo.40}. Moreover a sequence of complexity $n+1$ can be characterized as a coding of a translation by an irrational angle on the circle, see \cite{Co.He.73} and \cite{Pyth.02}. Thus we can not find another partition of the circle such that the complexity of the translation related to this partition is less than $n+1$.

For the two dimensional torus, the first result was made for a particular translation:
 there exists a translation, a fundamental domain which  is a fractal set and a partition of this set such that every orbit is coded by a word of complexity $2n+1$. This is a famous result of Rauzy, the partition of the torus is called the Rauzy fractal and each element has a fractal boundary, see \cite{Rau.82}. 
For any translation, some bounds on the complexity function are given if the partition and the fundamental domain are made of polygons. The first one has been made in \cite{Arno.Maud.Shio.Tamu.94} for a partition in three rhombi. This result has been generalized to any dimension in \cite{Bary.95}.
In \cite{Beda.03} a correction of the proof in the two dimensional cases was made and in \cite{Beda.09}  an improvement of the result in any dimension was done for the same type of partition. Finally, in \cite{Chev.09}, the growth of the complexity was computed for every polygonal partition in $\mathbb{T}^2$. In this case the complexity is quadratic in $n$. Thus for the translation defined by Rauzy, the lowest possible complexity is less than $2n+1$.
The same phenomena appear in higher dimension for some particular translations. In \cite{Bert.12} the second author shows that for a translation on a two dimensional torus, the complexity function related to a partition  cannot be less than $2n+1$. This bound is sharp, by Rauzy's result. This means that for a minimal translation, whatever the partition is, we can not find one with a  complexity smaller than $2n+1$. Here we prove a similar result in the $k$ dimensional torus.

In order to understand it and to find the fundamental domain with the lowest partition we define the notion of piecewise translation associated to a minimal translation in Section \ref{se:not}.  This notion means that we study a minimal translation and one partition in the same object.

\begin{Theorem}
Let $k\geq 1$ and $m\geq 1$ be two integers, let $\bold{a}$ be a vector in $\RR^k$ such that the translation by $\bold{a}$ on the torus $\TT^k$ is minimal. Let $(T,\Do_1,\ldots,\Do_m)$ be a piecewise translation associated to this translation. Then the complexity function of the piecewise translation fulfills 
$$\forall n\geq 1, \quad p_k(n)\geq kn+1.$$
\end{Theorem}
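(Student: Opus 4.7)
The plan is to establish the refined inequality $p_k(n+1) - p_k(n) \geq k$ for every $n \geq 0$, which combined with $p_k(0) = 1$ yields $p_k(n) \geq kn+1$ by telescoping. Identifying $p_k(n)$ with the number of atoms of the refined partition $\Po^{(n)} := \bigvee_{j=0}^{n-1} T^{-j}\Po$ of the fundamental domain $\Do$, and using Cassaigne's inequality $p_k(n+1) - p_k(n) \geq |RS_k(n)|$ (where $RS_k(n)$ is the set of right-special factors of length $n$, in bijection with the atoms of $\Po^{(n)}$ whose interior meets $T^{-n}(\partial \Po)$), the problem reduces to showing $|RS_k(n)| \geq k$ for every $n \geq 0$.

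The next step is to exploit the $k$-dimensional topology of $\TT^k$ to analyze the boundary $\partial \Po = \bigcup_{i=1}^m \partial\Do_i$. Because $(T,\Do_1,\ldots,\Do_m)$ is a piecewise translation of $\Do$ and $T$ is a bijection, the local translation vectors satisfy $\bold{a}_i \equiv \bold{a} \pmod{\ZZ^k}$, and the collection of integer shifts $\{\bold{a}_i - \bold{a}\}$ must generate $\ZZ^k$ as a group, otherwise the system would factor through a proper subtorus and contradict the minimality of the translation by $\bold{a}$. This algebraic constraint forces $\partial \Po$, viewed in $\TT^k$, to carry $(k-1)$-cycles whose homology classes span a full-rank subgroup of $H_{k-1}(\TT^k;\ZZ) \cong \ZZ^k$; equivalently, $\partial \Po$ has nontrivial ``width'' in $k$ linearly independent directions of the torus.

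The final step is to convert this topological full-rank property into the count $|RS_k(n)| \geq k$. For each of the $k$ independent homology directions represented in $\partial \Po$, I would argue that the translate $T^{-n}(\partial \Po)$ crosses at least one atom of $\Po^{(n)}$, and that the $k$ atoms obtained from the $k$ different directions are pairwise distinct, the distinctness following from the $\QQ$-linear independence of the coordinates of $\bold{a}$ via a Kronecker-type equidistribution argument. The main obstacle is precisely this last inference: ensuring that $k$ independent homological directions produce $k$ genuinely distinct split atoms, rather than several cuts accumulating inside a single atom. I would handle this by a double counting combined with the minimality hypothesis, taking the two-dimensional result \cite{Bert.12} as the template for $k=2$, and extending to general $k$ either by induction on the dimension or by a direct combinatorial lemma about arrangements of translates of $(k-1)$-faces inside a fundamental domain of $\RR^k$.
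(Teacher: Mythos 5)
Your overall skeleton --- a base case $p_k(1)\geq k+1$ plus the increment $p_k(n+1)-p_k(n)\geq k$ --- is the same as the paper's, but the step that carries all the content is missing. The reduction via Cassaigne's inequality to ``at least $k$ right-special factors of each length'' is fine as bookkeeping, yet you never prove that bound: you yourself flag the passage from ``$k$ independent homology directions in $\partial\Po$'' to ``$k$ pairwise distinct split atoms'' as the main obstacle, and you only offer strategies (double counting, induction on the dimension, an unspecified combinatorial lemma). That inference is precisely the theorem, so as it stands the proposal is a plan, not a proof. Moreover, the geometric/topological frame you set up does not apply at the paper's level of generality: the pieces $\Do_i$ are merely measurable sets and the integer-jump map $\mathbf{n}$ is only measurable (it may take infinitely many values), so $\partial\Po$ can be wild --- it can have positive measure or even be all of $\RR^k$ --- and statements about $(k-1)$-cycles carried by $\partial\Po$, or about $T^{-n}(\partial\Po)$ ``crossing'' atoms, are not meaningful there. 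The auxiliary claim that the integer shifts must generate $\ZZ^k$ lest the system ``factor through a proper subtorus and contradict minimality'' is also unjustified: minimality is a hypothesis on $\mathbf{a}$ alone and is not contradicted by the jumps lying in a proper subgroup, and the paper never needs such a statement. Finally, your telescoping from $p_k(0)=1$ cannot start from ``$|RS_k(0)|\geq k$'': there is only one factor of length $0$, so the first difference at $n=0$ is $p_k(1)-1$ and the base case $p_k(1)\geq k+1$ requires its own argument.

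For contrast, the paper's proof is purely measure-theoretic and avoids boundaries altogether. Birkhoff's theorem applied at a recurrent generic point yields the relation $0=\mathbf{a}+\sum_i\lambda(\Do_i)\,\mathbf{r}_i$, where the $\mathbf{r}_i\in\QQ^k$ are the average integer drifts of \eqref{eq:defri}; a linear-algebra lemma (Lemma \ref{lem}) shows that a minimal $\mathbf{a}$ cannot be expressed with fewer than $k$ such rational directions, giving $m\geq k+1$ and hence the base case $p_k(1)\geq k+1$. For the increment, the measures of the atoms of the $(n+1)$-st refinement are shown to lie in the cycle space of the $n$-th Rauzy graph, whose dimension is $p_k(n+1)-p_k(n)+1$ by Theorem \ref{thm:graph}; substituting these cycle coordinates into the same Birkhoff relation and invoking the lemma again forces $p_k(n+1)-p_k(n)\geq k$. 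If you want to salvage your route, you would at least need to restrict to partitions with tame boundaries, and even then the distinctness of the $k$ split atoms --- the point you postpone --- is exactly where the difficulty lies.
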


We will see in Section \ref{subse:ex} that there exists for each integer $k$ a minimal translation on the torus $\TT^k$ and a a piecewise translation associated to this translation such that the complexity function is $kn+1$ for any integer $n$. If we fix an integer $k$ and an arbitrary minimal translation on $\TT^k$, we do not know yet what  the minimal complexity function is for  a piecewise translation associated to this translation.

The complexity function is related to a topological invariant of dynamical systems: the topological entropy. This notion is used to classify systems of zero topological entropy. There are few systems for which the complexity can be computed. Recently Host, Kra and Maass give a lower bound in \cite{Host.Kra.Ma.13}, of the complexity of a class of dynamical systems called  nilsystems. These systems are close to rotations.

%%%%%%%%%%%%%%%%%%%%%%%%%%%%%%%%%%%%%%%%%%%%%%%%%%%%%%%%%%%%%%%%%%%%%%%%%%%%%%%%%%%%%%%%%%%%%%%
\section{Examples}\label{sec:example}
We begin by a list of examples which show that the bound on the complexity in the Theorem \ref{thm} is sharp. The definition of the complexity function and the definition of a piecewise isometry associated to a translation will be given in the next section. We refer to \cite{Pyth.02} for a complete background on combinatorics on words.
\subsection{Background on Substitutions}
To present the examples we need to give some background on Combinatorics on Words.
Consider an alphabet over $k$ letters $\{a_1, a_2, \dots, a_k\}$ and the free monoid $\{a_1,a_2,\dots,a_k\}^*$. A morphism of the free monoid is called a {\it substitution}. For example consider 
$$\sigma_k=\sigma :\begin{cases}a_1\mapsto a_1a_2\\ a_2\mapsto a_1a_3\\ \vdots\\ a_k\mapsto a_1.\end{cases}$$
This substitution is called {\it k-bonacci} substitution.
The sequence $(\sigma^n(a_1))_{n\in\mathbb{N}}$ converges for the product topology to a fixed point $w_k$ of $\sigma$:
$$w_k=a_1a_2a_1a_3a_1a_2a_1a_4  \ldots a_1a_ka_1 \dots, \sigma(w_k)=w_k$$

The abelianization of the monoid is $\ZZ^k$. We can represent it in $\RR^k$ with a basis where each coordinate vector represents either $a_1,\dots, a_{k-1}$ or $a_k$.
The abelianization of $\sigma$ is a linear morphism of $\mathbb{Z}^k$ with matrix
$$
M_\sigma=\left( \begin{array}{c|c}
1\ \cdots \ 1&1\\
\hline
I_k & \begin{array}{cc} 0 \\ \vdots \\ 0 \end{array}
\end{array}\right)
\text{ , where $I_k$ is the identity matrix of $\RR^{k-1}$.}
$$
The abelianization of $w_k$ is a broken line in $\mathbb{Z}^k$ denoted $\delta$.

This matrix has one real eigenvalue of modulus bigger than $1$. The eigenspace is of dimension one, and is called $H_e$. The other eigenvalues are complex numbers if $k\geq 3$, we denote by $H_c$ a real hyperplane orthogonal to $H_e$.
Consider the projection on $H_c$ parallel to $H_e$. 
Denote $\mathcal{R}_{k-1}$ the closure of the projection of vertices of $\delta$. 
This set is the {\it Rauzy fractal} associated to the substitution.
There is a natural partition of this set in $k$ subsets defined as follows: 
First the vertices of $\delta$ can be split in $k$ classes. One made of vertices followed by an edge in the $a_1$ direction, one made of vertices followed by an edge in the $a_2$ direction, and so on. 
The closure of the projection of each family of vertices gives us a subset of $\mathcal{R}_{k-1}$ denoted $\mathcal{R}_{k-1}(a_i), i=1\dots k$. 
$$\mathcal{R}_{k-1}=\displaystyle\bigcup_{i=1}^k\mathcal{R}_{k-1}(a_i).$$

\begin{theorem}\cite{Messa.98}
For every integer $k\geq 1$ the Rauzy fractal $\mathcal{R}_{k}$ is a fundamental domain of the torus $\TT^k$. 
Let $u$ be the projection of one basis vector of $\mathbb{R}^{k+1}$ on $H_c$. Then the translation by vector $u$ on the torus $\TT^k$ is minimal, and the complexity of this map related to the partition of $\mathcal{R}_{k}$ has complexity
$$p_k(n)=kn+1.$$
\end{theorem}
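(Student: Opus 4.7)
The proof decomposes into three claims: (a) $\mathcal{R}_k$ tiles the hyperplane $H_c$ under translations from $\pi(\ZZ^{k+1})$, hence is a fundamental domain of $\TT^k$; (b) the translation by $u$ is minimal on $\TT^k$; (c) the induced coding has complexity $kn+1$. The unifying input is the Pisot character of $M_\sigma$: its characteristic polynomial $X^{k+1}-X^k-\cdots-X-1$ is irreducible over $\QQ$, with one real root $\lambda>1$ spanning $H_e$ and the remaining $k$ roots of modulus strictly less than one, whose real span is $H_c$.

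For (a), the plan is to combine a self-similarity relation with a measure-theoretic covering argument. Contractivity of $M_\sigma$ restricted to $H_c$ immediately gives that the projections of the vertices of $\delta$ stay within a bounded region of $H_c$, so $\mathcal{R}_k$ is compact. Iterating the substitution on the fixed point yields a natural graph-directed IFS expressing each subpiece $\mathcal{R}_k(a_j)$ as a union of $M_\sigma|_{H_c}$-contracted translates of the $\mathcal{R}_k(a_i)$, indexed by the occurrences of $a_j$ in the images $\sigma(a_i)$. The Perron--Frobenius eigenvector of $M_\sigma$ prescribes the relative Lebesgue measures of the pieces, and iteration forces their $\pi(\ZZ^{k+1})$-translates to cover $H_c$. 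To upgrade this covering to a genuine tiling one must verify the \emph{strong coincidence condition} of Arnoux and Ito, which for the $k$-bonacci is immediate from the observation that every $\sigma^n(a_i)$ begins with $a_1$.

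For (b), minimality of the translation by $u$ reduces to rational independence of its coordinates in $H_c\cong\RR^k$ modulo the lattice $\pi(\ZZ^{k+1})$. The irreducibility of the characteristic polynomial of $M_\sigma$ guarantees that $H_c$ contains no nonzero vector with rational coordinates, so $u$ admits no nontrivial rational relation; Weyl's criterion then yields density, in fact equidistribution, of the orbit. For (c), the plan is to identify the fixed point of $\sigma$ as an \emph{Arnoux--Rauzy} sequence over $k+1$ letters: for each length $n\geq 0$ there is a unique right-special factor which extends in exactly $k+1$ distinct ways (and dually on the left). This structural fact can be proved by induction on $n$, using that each letter $a_i$ with $i\geq 2$ has a unique preimage under $\sigma$. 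The Arnoux--Rauzy property gives $p_k(n+1)-p_k(n)=k$ and hence, together with $p_k(0)=1$, $p_k(n)=kn+1$; this symbolic count equals the complexity of the piecewise translation by (a), since the Rauzy partition exactly codes the translation on its fundamental domain.

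The principal obstacle is (a): turning the combinatorial self-similarity into an honest tiling. One has to rule out positive-measure overlaps between distinct $\pi(\ZZ^{k+1})$-translates of $\mathcal{R}_k$ and simultaneously identify the quotient with $\TT^k$; both points rest on the strong coincidence condition and on fine estimates of the projection $\pi$. Once (a) is secured, (b) and (c) follow by standard substitutive dynamics.
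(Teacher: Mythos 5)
The paper does not prove this statement: it is imported verbatim from Messaoudi's work (the citation \cite{Messa.98}) and used only as background to show that the bound $kn+1$ in the main theorem is attained. So there is no in-paper proof to compare against, and your attempt has to be judged on its own. Your outline follows the standard modern route (Pisot structure of $M_\sigma$, graph-directed self-similarity of the subtiles, strong coincidence, Arnoux--Rauzy combinatorics for the complexity), which is a legitimate and in fact more systematic path than Messaoudi's original argument; parts (b) and (c) as you sketch them are sound, including the identification of the fixed point as an Arnoux--Rauzy word over $k+1$ letters with exactly one right-special factor of each length extendable in $k+1$ ways, giving $p(n)=kn+1$.

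The genuine gap is in (a), precisely where you locate the ``principal obstacle'' but then dispose of it too quickly. The strong coincidence condition of Arnoux and Ito does \emph{not} upgrade the covering to a tiling of $H_c$ by $\pi(\ZZ^{k+1})$-translates of $\mathcal{R}_k$. What strong coincidence gives is the measure-disjointness of the subtiles $\mathcal{R}_k(a_1),\ldots,\mathcal{R}_k(a_{k+1})$ inside $\mathcal{R}_k$ and hence a well-defined domain exchange conjugate to the substitutive system; the non-overlapping of distinct \emph{lattice translates} of the whole set $\mathcal{R}_k$ (the fundamental-domain property, equivalently that the exchange of pieces is actually the toral translation by $u$) is a strictly stronger statement, and its derivation from strong coincidence in general is exactly the open Pisot substitution conjecture. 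For the $k$-bonacci family it is true, but it has to be established by a separate argument -- for instance via the finiteness property (F) of the associated $\beta$-numeration (Frougny--Solomyak), which is essentially how Messaoudi proceeds, or via an explicit verification of a super-coincidence/geometric coincidence condition. As written, your step ``to upgrade this covering to a genuine tiling one must verify the strong coincidence condition'' asserts an implication that is not available, and the proof of (a) -- on which your proof of (c) also leans, since you use the tiling to transfer the symbolic complexity to the piecewise translation -- is therefore incomplete at its crucial point.
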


To rephrase the statement, there exists a piecewise translation map associated to the translation by vector $u$, and the complexity of this translation according to the partition $\mathcal{R}_{k}=\displaystyle\bigcup_{i=1}^{k+1}\mathcal{R}_{k}(a_i)$ has complexity $kn+1$.

%%%%%%%%%%%%%%%%%%%%%%%%%%%%%%%%%%%%%%%%%%%%%%%%%%%%%%%%%%%%%%%%
\subsection{Examples of substitutions} \label{subse:ex}
We list some examples of piecewise translations associated to $k$-bonacci substitutions.
\begin{itemize}
\item For $k=2$, $R_1=[0,1)$, the translation is $x\mapsto x+a \quad mod\quad 1$ for $a=\varphi$ the golden mean. The partition is $D_1=[0,1-a), D_2=[1-a,1)$. The fixed point $w_2$ is called {\it Fibonacci word}.

\item For $k=3$, $w_k$ is called {\it Tribonacci word}. The partition is made of fractal sets.
\end{itemize}

\begin{figure}[h!]
\centering
\begin{tikzpicture}
	\draw  [fill=gray,color=red!100!black,opacity=1] (0,0)  -- (2,0) -- (2,0.2) -- (0,0.2) -- (0,0);
	\draw [fill=gray,color=blue!100!black,opacity=1] (2,0) -- (3,0) -- (3,0.2) -- (2,0.2) -- (2,0);
\draw[red] (0,0)--(2,0);
\draw [blue] (2,0)--(3,0);
\draw [->] (4,0.1)--(5,0.1);
	\draw  [fill=gray,color=red!100!black,opacity=1] (7,0)  -- (9,0) -- (9,0.2) -- (7,0.2) -- (7,0);
	\draw [fill=gray,color=blue!100!black,opacity=1] (7,0) -- (6,0) -- (6,0.2) -- (7,0.2) -- (7,0);
\draw [blue] (6,0)--(7,0);
\draw[red] (7,0)--(9,0);
\end{tikzpicture}

 \includegraphics[width=6cm]{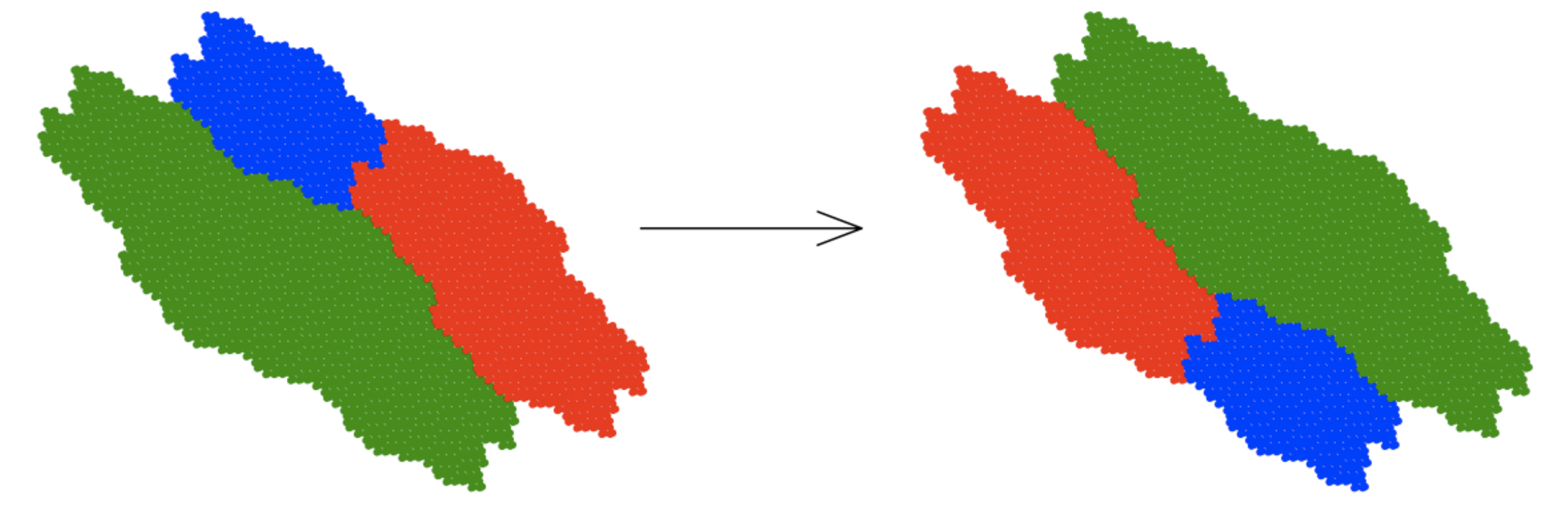}
\caption{Representation of the piecewise translation associated to the k-bonacci word for $k=2$ and $3$.}
\end{figure}

\begin{figure}[h!] \centering  \includegraphics[width=6cm]{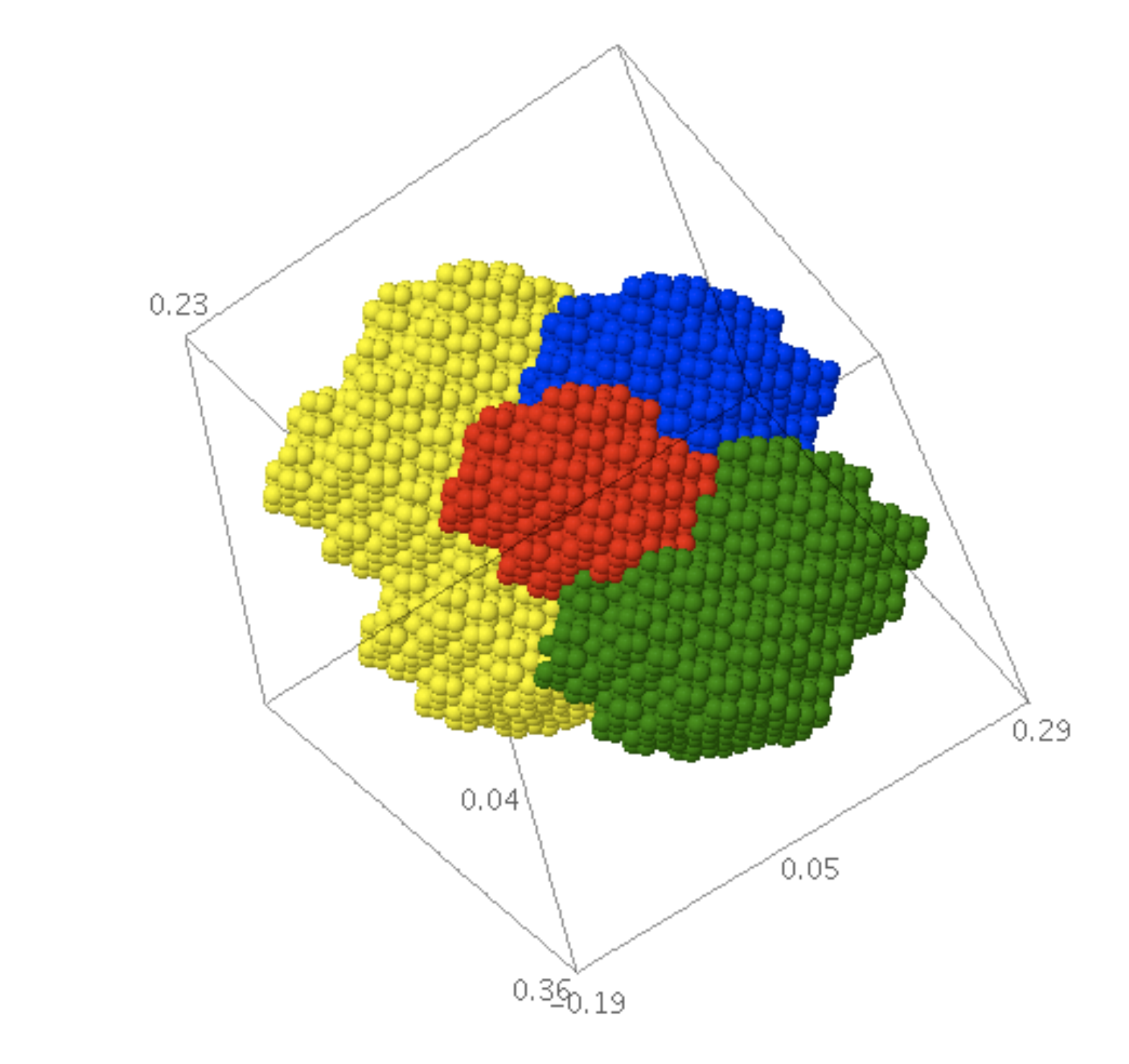}  \includegraphics[width=7cm]{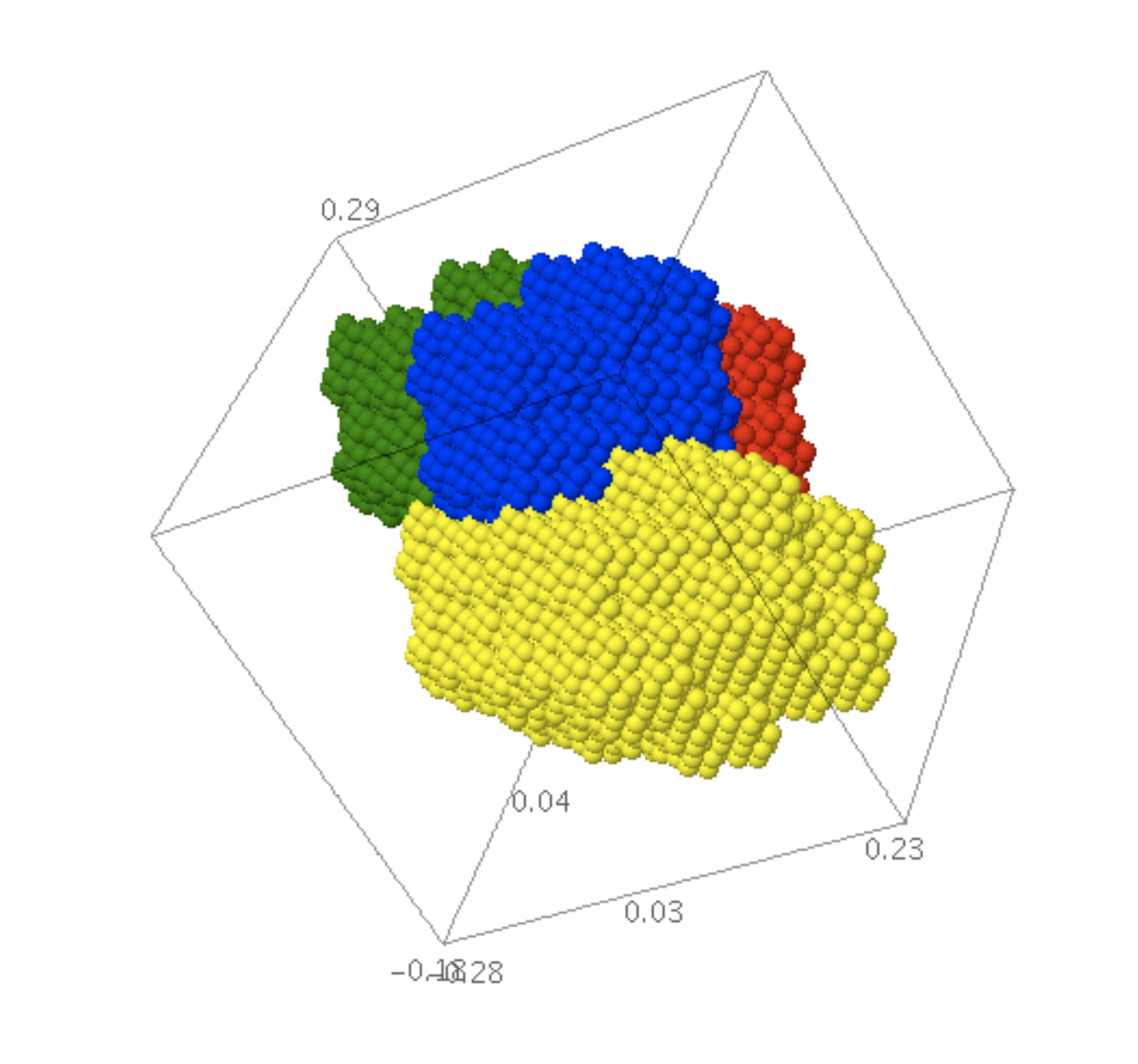}
\caption{The Rauzy fractal and the piecewise translation associated to the 4-bonacci substitution.}
\end{figure}

\subsection{Example of a translation}
Here we present an example of translation on the torus $\mathbb{T}^2$. 
Consider an hexagon with identified opposite edges. Denote $\bold{u},\bold{v}, \bold{w}$ the three vectors which support the edges of the hexagon. Then the hexagon tiles the plane with a lattice generated by $\bold{v}-\bold{u},\bold{w}-\bold{u}$.  Thus the hexagon is a fundamental domain of a torus $\mathbb{T}^2$, and there is a partition of this domain in three parallelograms. Now consider the map defined by the following figure. It is a piecewise translation, and modulo this lattice, the three translation vectors $\bold{u},\bold{v}, \bold{w}$ are equal, we can denote it by $\bold{a}$. 
Thus the piecewise translation is associated to the translation on the torus given by:
$$\begin{array}{ccc}
\mathbb{T}^2&\rightarrow&\mathbb{T}^2\\
m&\mapsto&m+\bold{a}
\end{array}$$

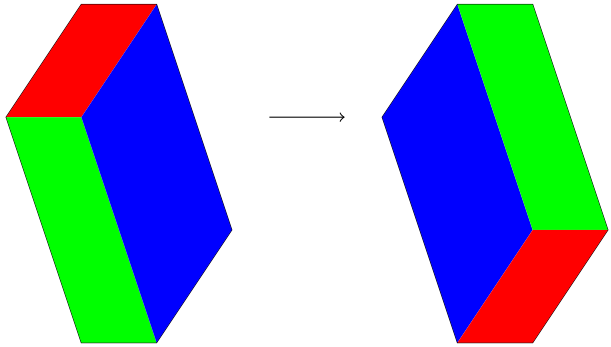
\begin{figure}
\begin{center}
\begin{tikzpicture}[scale=.5]
\draw (0,0)--(2,3)--(4,3)--(6,-3)--(4,-6)--(2,-6)--cycle;
\fill[red] (0,0)--(2,0)--(4,3)--(2,3);
\fill[green] (0,0)--(2,0)--(4,-6)--(2,-6);
\fill[blue] (2,0)--(4,3)--(6,-3)--(4,-6);

\draw[->] (7,0)--(9,0);
\draw (10,0)--(12,3)--(14,3)--(16,-3)--(14,-6)--(12,-6)--cycle;
\fill[blue] (10,0)--(12,3)--(14,-3)--(12,-6);
\fill[green] (12,3)--(14,3)--(16,-3)--(14,-3);
\fill[red] (12,-6)--(14,-6)--(16,-3)--(14,-3);
\end{tikzpicture}
\caption{Piecewise translation associated to a translation on the two dimensional torus.}
\end{center}
\end{figure}

In this case the complexity function is quadratic in $n$:
\begin{theorem}[\cite{Beda.03}]\label{thm:moi}
For almost all $\bold{a}\in\mathbb{T}^2$ the complexity function of the translation related to this partition is:
$$p_2(n)=n^2+n+1.$$
\end{theorem}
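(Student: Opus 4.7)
The plan is to identify $p_2(n)$ with the number of atoms of the refined partition $\mathcal{P}_n := \bigvee_{i=0}^{n-1} T^{-i}\mathcal{P}$, and then to count these atoms using Euler's formula on the torus under a generic-position hypothesis on $\bold{a}$.

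By minimality, each factor of length $n$ in the coding corresponds to a unique nonempty atom of $\mathcal{P}_n$, so $p_2(n)$ equals the number of connected components of $\TT^2\setminus \Sigma_n$, where $\Sigma_n := \bigcup_{i=0}^{n-1} T^{-i}(\partial \mathcal{P})$. On the torus, $\partial\mathcal{P}$ is a graph with three vertices (two vertex-classes of hexagon corners and the hexagon center) and six edges --- three \emph{hexagon} edges between the corner classes and three \emph{internal} edges from the center to one corner class --- each edge a line segment in one of the three hexagon directions $\bold{u},\bold{v},\bold{w}$. Thus $\Sigma_n$ is the union of $n$ translates of this graph by $0,-\bold{a},\ldots,-(n-1)\bold{a}$.

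Next I would establish generic position: the set $\mathcal{B}\subset\TT^2$ of translation vectors for which, at some iteration, two translated segments coincide, three segments concur at a point, or a translated endpoint lies on another translated segment, is a countable union of nontrivial affine constraints on $\bold{a}$ and hence has Lebesgue measure zero. Outside $\mathcal{B}$, and restricted to the full-measure set of $\bold{a}$ for which the torus translation is minimal, $\Sigma_n$ is a planar graph in general position for every $n$. Applying Euler's formula on the torus ($\chi(\TT^2)=0$) gives $p_2(n) = E_n - V_n$. In general position, $V_n = 3n + I_n$, where $3n$ counts the translated original vertices and $I_n$ counts transverse crossings of segments from distinct translates in distinct direction classes; correspondingly each crossing subdivides two edges, so $E_n = 6n + 2I_n$ and hence $p_2(n) = 3n + I_n$. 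The target formula reduces to proving $I_n = (n-1)^2$ by direct enumeration, using that two segments in distinct hexagon directions meet transversely in at most one point, and that the intersections between a segment at step $i$ and one at step $j$ are governed by the relative displacement $(j-i)\bold{a}$ and by the (fixed) arc lengths.

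The main obstacle is the combinatorial bookkeeping for $I_n$. Because $\partial\mathcal{P}$ mixes long hexagon segments with shorter internal arcs having different endpoints, one must separately track hexagon--hexagon, hexagon--internal, and internal--internal crossings, and verify that many ``potential'' crossings between translates whose supports are geometrically disjoint on the torus in fact do not occur, with the surviving count summing to exactly $(n-1)^2$ uniformly across the generic parameter region. A cleaner inductive route, using Cassaigne's second-difference formula $p_2(n+2) - 2 p_2(n+1) + p_2(n) = 2$ (which is what $p_2(n) = n^2+n+1$ predicts), reduces the problem to counting bispecial factors of each length, but still requires the same generic-position input and analogous case analysis of arc geometry.
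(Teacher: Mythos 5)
This statement is quoted from \cite{Beda.03} and the paper gives no proof of it, so your proposal can only be measured against what a complete argument would require. Your framework --- identifying $p_2(n)$ with the number of cells of $\mathbb{T}^2\setminus\Sigma_n$, invoking Euler's formula with $\chi(\mathbb{T}^2)=0$ to get $p_2(n)=E_n-V_n=3n+I_n$, and discarding a measure-zero set of non-generic $\bold{a}$ --- is the standard route taken in the literature (Arnoux--Mauduit--Shiokawa--Tamura and its corrections), and those reductions are sound. But the proposal stops exactly where the theorem begins: the identity $I_n=(n-1)^2$ is asserted as a target, not proved, and you explicitly defer the case analysis of hexagon--hexagon, hexagon--internal and internal--internal crossings. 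This is not routine bookkeeping: determining which of the combinatorially possible crossings between the translate at step $i$ and the translate at step $j$ actually occur on the torus is precisely the step where the original argument of \cite{Arno.Maud.Shio.Tamu.94} contained an error that \cite{Beda.03} was written to repair. Without that enumeration (or the equivalent count of bispecial factors feeding Cassaigne's second-difference formula), no part of the exact value $n^2+n+1$ has been established --- only the a priori form $3n+I_n$.

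A second, smaller gap: you pass from ``number of factors of length $n$'' to ``number of connected components of $\mathbb{T}^2\setminus\Sigma_n$'' without justification. These agree only if (i) every nonempty cylinder set of $\mathcal{P}_n$ is connected, i.e.\ is a single cell, and (ii) distinct cells carry distinct itineraries. Neither is automatic on the torus: a cylinder is an intersection of translated parallelograms taken modulo the lattice, and convexity arguments valid in the plane do not transfer directly. The standard fix is to work in the universal cover and show each cell is a convex polygon determined by its itinerary; this needs to be stated and checked, since if a cylinder split into two cells your Euler count would overestimate $p_2(n)$, and the theorem claims an equality, not just a lower bound.
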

%%%%%%%%%%%%%%%%%%%%%%%%%%%%%%%%%%%%%%%%%%%%%%%%%%%%%%%%%%%%%%%%
\section{Notations and statement of the result} \label{se:not}
%%%%%%%%%%%%%%%%%%%%%%%%%%%%%%%%%%%%%%%%%%%%%%%%%%%%%%%%%%%%%%%%

Let $k\geq 2$ be an integer, let $\bold{a}=(a_1,\dots,a_k)\in \RR^k$ be a vector and $\lambda$  be the Lebesgue measure on $\RR^k$. The {\it translation on the torus} $\TT^k=\RR^k/ \ZZ^k$ by $\bold{a}$ is the map
$$ \begin{array}{ll} \TT^k & \rightarrow \TT^k  \\ \bold{x} & \mapsto \bold{x}+\bold{a}.\end{array}$$

This map is {\it minimal} (every orbit is dense) if and only if 
$$\forall q_1,\ldots,q_k, \ \forall q\in \mathbb{Q}, \ q_1a_1+\cdots+q_ka_k=q\Longrightarrow q_1=\dots=q_k=0.$$

For a minimal translation, Lebesgue measure is the unique invariant probability measure and then, it is an ergodic measure.

A {\it fundamental domain} of the torus $\TT^k=\RR^k/ \ZZ^k$ is a subset $\mathcal{D}$ of $\mathbb{R}^k$ which periodically tiles the space $\RR^k$ by the action of $\ZZ^k$, except maybe on a subset of $\RR^k$ with measure zero.

\begin{definition} 
A {\it piecewise translation map} $(T,\Do_1,\ldots,\Do_m)$ {\it associated} to a translation  by the vector  $\bold{a}$ is a map defined on $\Do=\displaystyle\bigcup_{i=1}^m\Do_i$ where $\Do_1,\ldots,\Do_m$ are $m$ measurable disjoint sets  of $\mathbb{R}^k$, such that for each $\bold{x}\in \mathcal{D}$: $T(\bold{x}) = \bold{x}+\bold{a}+\bold{n}(\bold{x})$ where:
\begin{itemize}
\item $\bold{n} :\mathcal D \mapsto \ZZ^k$ is a measurable map,
\item $\Do=\displaystyle\bigcup_{i=1}^m\Do_i$ is a fundamental domain of the torus,
\item for each integer $i\in\{1,\ldots,m\}$, there exists a vector $\bold{r}_i\in \QQ^k$ such that:
	\begin{equation} \label{eq:defri}
	\int _{\Do_i} \bold{n} (\bold{x})\  \mathsf{d} \lambda(\bold{x}) = \lambda(\Do_i)\cdot  \bold{r}_i .
	\end{equation}
\end{itemize}
\end{definition}

The {\it coding} of the orbit of a point $\bold{x}\in \Do$ under $T$ is defined by
$$\mathsf{Cod}:\Do\mapsto \{1,\cdots,m\}^\NN$$ 
$$\mathsf{Cod}(\bold{x})_n = i \Longleftrightarrow T^n\bold{x}\in \Do_i.$$

\begin{remark}
Examples given in preceding Section show that a rotation can have different combinatorics depending on the fundamental domain of the torus. This is why we introduce preceding definition. It is different from the definition of a partition, due to the last points.  
First the domain of $\Do$ is not assumed to be bounded. Here we do not assume that $\mathsf{Cod}$ is an injective map, see the work of Halmos for a general reference about the coding \cite{Hal.56}. Moreover our result holds in particular for some piecewise translations for which the dynamical  symbolic system is not conjugate to the translation on the torus. The terminology "piecewise translation" is not perfect since we allow multiple vectors of translation in each subset of the fundamental domain. Indeed,  the map $ \bold{n}$ can take an infinity of values. However, in most interesting examples, the partition of the fundamental domain is such that there is only one translation vector in each piece. 
\end{remark} 

A word of length $n$ in $\mathsf{Cod}\left( \bold{x}\right)$ is a finite sequence of the form $(\mathsf{Cod}\left( \bold{x}\right)_m)_{p\leq m \leq p+n-1}.$
The {\it complexity function} of the piecewise translation $(T,\Do_1,\ldots,\Do_m)$ is a map from $\NN^*$ to $\NN$ which  associates to every integer $n\geq 1$, the number of different words of lengths $n$ in $\mathsf{Cod}\left( \bold{x}\right)$. This number is independent of the point $\bold{x}$ for a minimal rotation. Our principal result is the following:

\begin{theorem}\label{thm}
Let $k\geq 1$ and $m\geq 1$ be two integers, let $\bold{a}$ be a vector in $\RR^k$ such that the translation by $\bold{a}$ on the torus $\TT^k$ is minimal. Let $(T,\Do_1,\ldots,\Do_m)$ be a piecewise translation associated to this translation. Then the complexity function of the piecewise translation fulfills 
$$\forall n\geq 1, \quad p_k(n)\geq kn+1.$$
\end{theorem}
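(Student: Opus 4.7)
The plan is to show the stronger statement $p_k(n+1) - p_k(n) \geq k$ for all $n \geq 0$, which combined with $p_k(0) = 1$ telescopes to $p_k(n) \geq kn + 1$. The main reduction is to the torus: since $T$ descends modulo $\ZZ^k$ to the minimal rotation by $\bold{a}$, the cells of $\Po_n := \bigvee_{j=0}^{n-1} T^{-j}\{\Do_1,\ldots,\Do_m\}$ are in measure-preserving bijection with those of the common refinement on $\TT^k$ of translates of the induced partition $P := \{\pi(\Do_i)\}_{i=1}^m$ by $\{-j\bold{a}\}_{j=0}^{n-1}$, where $\pi:\RR^k\to\TT^k$ is the canonical projection. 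Thus $p_k(n)$ equals the number of cells of this torus arrangement.

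A key algebraic identity is obtained by integrating $T(\bold{x}) - \bold{x} = \bold{a} + \bold{n}(\bold{x})$ over $\Do$ and using that $T$ preserves Lebesgue measure, yielding $\sum_{i=1}^m \lambda(\Do_i)\,\bold{r}_i = -\bold{a}$. With $\sum_i \lambda(\Do_i) = 1$ and $\bold{r}_i \in \QQ^k$, this writes $-\bold{a}$ as a convex combination of rational points. Since the coordinates of $\bold{a}$ together with $1$ are $\QQ$-linearly independent, $-\bold{a}$ cannot lie in any proper $\QQ$-rational affine subspace of $\RR^k$, so the $\bold{r}_i$ must affinely span $\RR^k$. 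In particular $m \geq k + 1$, which handles the base case by giving $p_k(1) \geq k + 1$.

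The inductive step proceeds via the Rauzy graph $\G$: the increment $p_k(n+1) - p_k(n)$ equals $\sum_{w}(r(w) - 1)$ over factors $w$ of length $n$, where $r(w)$ counts right-extensions. Geometrically this is the total number of new cells produced when the $n$-th translate of the partition boundary $\partial P$ is superimposed on the current arrangement on $\TT^k$. The key geometric lemma asserts this number is always at least $k$; its proof builds on the affine spanning of the $\bold{r}_i$ to show that $\partial P$ has non-trivial directional content in all $k$ independent directions, so that each new translate crosses the existing arrangement in $k$ essentially transverse ways, creating at least $k$ new regions.

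The main obstacle is making this directional transversality precise and uniform in $n$. The partition boundary $\partial P$ may be fractal (as in the Rauzy example), and successive translates on $\TT^k$ can interact in delicate ways; converting the algebraic $\QQ$-spanning of the $\bold{r}_1,\ldots,\bold{r}_m$ into a quantitative region-counting bound that holds for every $n$, independently of the specific geometry of $\partial P$ and of the accumulated arrangement from earlier iterations, is the heart of the proof.
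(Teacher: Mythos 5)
Your base case is essentially the paper's Proposition \ref{prop1}, and the identity $0=\bold{a}+\sum_i\lambda(\Do_i)\bold{r}_i$ together with the rational-independence argument is exactly the right tool (one caveat: since $\Do$ is not assumed bounded, $\bold{x}$ need not be integrable on $\Do$, so you cannot simply split $\int_\Do(T(\bold{x})-\bold{x})\,d\lambda$ into $\int T-\int \bold{x}$; the paper obtains the identity instead via Birkhoff's theorem applied to $\bold{n}\cdot 1_{\Do_i}$ along a recurrent, generic orbit).

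The genuine gap is the inductive step, which you yourself flag as unresolved: the claim that each new translate of $\partial P$ ``crosses the existing arrangement transversally in $k$ ways'' and hence creates at least $k$ new cells is not proved, and in the generality of the definition it is not even well posed. The pieces $\Do_i$ are merely measurable (boundaries can be fractal or of positive measure, $\Do$ can be unbounded, $\bold{n}$ can take infinitely many values on a piece), so there is no meaningful notion of directional transversality of $\partial P$, and no mechanism is given to convert the affine spanning of $\bold{r}_1,\ldots,\bold{r}_m$ into a cell count uniform in $n$. The paper's Proposition \ref{prop2} avoids geometry entirely: on the $n$-th Rauzy graph $\G$ (vertices the cells of level $n$, edges the cells of level $n+1$), measure preservation of $T$ makes the edge function $\mathsf{a}_\ell^{(n)}\mapsto A_\ell^{(n+1)}$ a circulation, i.e.\ an element of $N(\G)=Z(\G)$, whose dimension is $\chi=p_k(n+1)-p_k(n)+1$ because the graph is connected (Theorem \ref{thm:graph}, using minimality/ergodicity). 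Hence all the measures $A_\ell^{(n+1)}$ are integer combinations of only $\chi$ real parameters $\alpha_1,\ldots,\alpha_\chi$; substituting into $0=\bold{a}+\sum_\ell A_\ell^{(1)}\bold{r}_\ell$ rewrites $\bold{a}$ as a combination of at most $\chi$ vectors of $\QQ^k$, and Lemma \ref{lem} (minimality forbids rational relations) then forces $\chi$ to be large enough that $p_k(n+1)-p_k(n)\geq k$. This linear-algebraic use of the cycle space of the Rauzy graph, combined with the same measure identity you derived for the base case, is the missing heart of your argument; without it (or a genuinely worked-out substitute) the proposal does not prove the theorem.
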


The proof is based on the following propositions proved in Section \ref{subse1} and Section \ref{subse2}.

\begin{proposition} \label{prop1}
Let $k\geq 1$ and $m\geq 1$ be two integers, $\bold{a}$ a vector in $\RR^k$ such that the translation by $\bold{a}$ on the torus $\TT^k$ is minimal. Let $(T,\Do_1,\ldots,\Do_m)$ be a piecewise translation associated to this translation. 
Then we have: 
$$m\geq k+1.$$
\end{proposition}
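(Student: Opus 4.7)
The plan is to prove the contrapositive: if $m\leq k$, I construct a nontrivial $\QQ$-linear relation among $1,a_1,\ldots,a_k$, contradicting the minimality hypothesis. The bridge between the dynamics and such a relation is the identity
\[
\bold{a}=-\sum_{i=1}^{m}\lambda(\Do_i)\,\bold{r}_i,\qquad (\ast)
\]
which I first establish, then exploit by elementary linear algebra over $\QQ$.

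To obtain $(\ast)$, I observe that $T:\Do\to\Do$ is a lift, through the quotient map $\pi:\Do\to\TT^k$, of the rotation $R_{\bold{a}}$; since $\Do$ is a fundamental domain, $\pi$ is a measurable bijection modulo a null set, so $T$ inherits from $R_{\bold{a}}$ both Lebesgue-measure-preservation and ergodicity. The function $\bold{n}$ is integrable on $\Do$ because each $\int_{\Do_i}\bold{n}\,\mathrm{d}\lambda=\lambda(\Do_i)\bold{r}_i$ is finite. Iterating $T(\bold{x})=\bold{x}+\bold{a}+\bold{n}(\bold{x})$ yields the telescoping identity
\[
\sum_{j=0}^{N-1}\bold{n}(T^j\bold{x})=T^N(\bold{x})-\bold{x}-N\bold{a}.
\]
Dividing both sides by $N$: the right-hand side tends to $-\bold{a}$ because $T^N(\bold{x})$ remains in the bounded fundamental domain $\Do$, while Birkhoff's ergodic theorem forces the left-hand side to tend $\lambda$-a.e.\ to $\int_{\Do}\bold{n}\,\mathrm{d}\lambda=\sum_i\lambda(\Do_i)\bold{r}_i$. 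Equating these two limits gives $(\ast)$.

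Now suppose for contradiction that $m\leq k$. The homogeneous linear system $\bold{q}\cdot\bold{r}_1=\bold{q}\cdot\bold{r}_2=\cdots=\bold{q}\cdot\bold{r}_m$ in the unknown $\bold{q}\in\QQ^k$ consists of only $m-1\leq k-1$ equations, hence admits a nonzero rational solution $\bold{q}$. Denote the common value by $s:=\bold{q}\cdot\bold{r}_i\in\QQ$. Taking the scalar product of $(\ast)$ with $\bold{q}$ and using $\sum_i\lambda(\Do_i)=\lambda(\Do)=1$,
\[
\bold{q}\cdot\bold{a}=-\sum_{i=1}^{m}\lambda(\Do_i)(\bold{q}\cdot\bold{r}_i)=-s\in\QQ.
\]
Since $\bold{q}\neq 0$, this is precisely the forbidden rational relation $q_1a_1+\cdots+q_ka_k\in\QQ$, contradicting the minimality criterion; hence $m\geq k+1$.

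The main obstacle is the rigorous derivation of $(\ast)$: the definition does not explicitly require $\Do$ to be bounded, so $T^N(\bold{x})/N\to 0$ is not automatic, and the alternative shortcut $\int_{\Do} T(\bold{x})\,\mathrm{d}\lambda=\int_{\Do}\bold{x}\,\mathrm{d}\lambda$ (which would give the integral identity directly from the measure-preservation of $T$) is unavailable when the coordinate functions fail to lie in $L^1(\Do)$. In practice one may restrict to a bounded choice of fundamental domain (always possible up to a null set) or perform a truncation argument before invoking ergodicity; I expect this to be the only genuinely technical point in the argument.
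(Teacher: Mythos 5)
Your overall strategy is the same as the paper's: iterate $T$, telescope, apply Birkhoff's theorem to $\bold{n}$, obtain the identity $0=\bold{a}+\sum_i\lambda(\Do_i)\bold{r}_i$, and then extract a rational relation contradicting minimality. Your ending is in fact cleaner than the paper's: where the paper proves an auxiliary induction lemma (Lemma \ref{lem}) and applies it after an affine reduction, you simply choose a nonzero $\bold{q}\in\QQ^k$ solving the $m-1\leq k-1$ homogeneous rational equations $\bold{q}\cdot(\bold{r}_i-\bold{r}_1)=0$, pair it with the identity, and use $\sum_i\lambda(\Do_i)=1$ to get $\bold{q}\cdot\bold{a}\in\QQ$, which contradicts the stated minimality criterion directly. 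That part is correct and a genuine simplification.

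However, the technical point you flag at the end is a real gap, and neither of your proposed repairs closes it. The paper's Remark explicitly states that $\Do$ is \emph{not} assumed bounded, so the assertion that ``$T^N(\bold{x})$ remains in the bounded fundamental domain'' is not available, and without it the left-hand side $\bigl(T^N(\bold{x})-\bold{x}\bigr)/N$ need not tend to $0$. The paper resolves this by taking $\bold{x}$ simultaneously Birkhoff-generic and \emph{recurrent}: since $T$ preserves the probability measure $\lambda|_{\Do}$, Poincaré recurrence gives, for almost every $\bold{x}$, return times $N_p$ with $T^{N_p}(\bold{x})$ in a fixed bounded neighbourhood of $\bold{x}$, whence $T^{N_p}(\bold{x})/N_p\to 0$ along that subsequence, which is all the identity requires. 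Your first suggested fix, replacing $\Do$ by a bounded fundamental domain, does not work as stated: the hypothesis $\bold{r}_i\in\QQ^k$ is attached to the given domain and partition, and after transporting the system to, say, $[0,1)^k$ the averaged jump vectors of the new $\bold{n}'$ over the transported pieces are in general irrational, so the scalar-product step at the end collapses. Your second suggestion (``a truncation argument'') is left unspecified, and what it would have to deliver is exactly the recurrence argument above. So: right approach and a nice linear-algebra finish, but the one step you isolate as delicate is indeed missing, and the missing idea is the recurrent-point (Poincaré) argument.
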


\begin{proposition} \label{prop2}
Let $k\geq 1$ and $m\geq 1$ be two integers, $\bold{a}$ a vector of $\RR^k$ such that the translation by $\bold{a}$ on the torus $\TT^k$ is minimal. Let $(T,\Do_1,\ldots,\Do_m)$ be a piecewise translation associated to this translation. 
Then the complexity function fulfills for every integer $n$:
$$p_k(n+1)-p_k(n)\geq k.$$
\end{proposition}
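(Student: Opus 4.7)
The plan is to derive Proposition \ref{prop2} from Proposition \ref{prop1} applied to iterates of the given piecewise translation together with the refined partition structure.

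First, for each $n\geq 0$, let $\mathcal{Q}_n = \{\Do_w : |w|=n\}$ denote the partition of $\Do$ into the length-$n$ cylinders $\Do_w = \bigcap_{j=0}^{n-1}T^{-j}(\Do_{w_j})$, indexed by admissible words $w=w_0\cdots w_{n-1}$. Then $|\mathcal{Q}_n|=p_k(n)$, and if $r(w)$ denotes the number of right extensions of $w$ in the language, the Rauzy-graph identity gives $p_k(n+1)-p_k(n) = \sum_{|w|=n}(r(w)-1)$, so the task reduces to bounding this sum below by $k$.

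The key preparatory step is to verify that $(T^{n+1},\mathcal{Q}_{n+1})$ is itself a piecewise translation map, associated to the translation by $(n+1)\bold{a}$ on $\TT^k$ (which is minimal because $\bold{a}$ is). Measurability, disjointness, and the fundamental-domain property are immediate. Writing $T^{n+1}(\bold{x})=\bold{x}+(n+1)\bold{a}+N_{n+1}(\bold{x})$ with $N_{n+1}(\bold{x})=\sum_{j=0}^{n}\bold{n}(T^j\bold{x})$, condition~(3) demands that $\int_{\Do_w}N_{n+1}\,d\lambda$ be a rational multiple of $\lambda(\Do_w)$ for every $|w|=n+1$. I would prove this by induction on $n$, using the Lebesgue invariance of $T$ to recast each summand as $\int_{T^j(\Do_w)}\bold{n}\,d\lambda$, decomposing $T^j(\Do_w)$ into length-one cylinders up to a null set, and invoking condition~(3) of the original system.

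Applying Proposition~\ref{prop1} to $(T^{n+1},\mathcal{Q}_{n+1})$ already yields the static bound $p_k(n+1)\geq k+1$, but this is not enough. To extract the increment, I would compare the vector identities
\[
\sum_{|w|=n}\lambda(\Do_w)\,\bold{r}_w^{(n)} = -n\lambda(\Do)\bold{a}, \qquad \sum_{|w|=n+1}\lambda(\Do_w)\,\bold{r}_w^{(n+1)} = -(n+1)\lambda(\Do)\bold{a},
\]
where $\bold{r}_w^{(j)}\in\QQ^k$ is the normalized average of $N_j$ on $\Do_w$. Subtracting and using the additivity of the measure under refinement, the difference concentrates on those cells of $\mathcal{Q}_{n+1}$ that actually split their parent in $\mathcal{Q}_n$. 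The same $\QQ$-rank argument that underlies Proposition~\ref{prop1}, applied to this localized identity and combined with the $\QQ$-linear independence of $1,a_1,\ldots,a_k$ that encodes minimality, then forces the number of "new" cells to be at least $k$, i.e.\ $p_k(n+1)-p_k(n)\geq k$.

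The main obstacle is the last step: converting the global rank statement of Proposition~\ref{prop1} into the relative statement that the \emph{new} cells of the refinement already contribute $k$. Concretely, I expect to need a reformulation of Proposition~\ref{prop1} as a bound on the number of "genuinely splitting" cells in any refinement of a piecewise translation, together with a reproof that isolates the role of the dimension $k$ in the rank calculation. The auxiliary step of verifying condition~(3) for each iterated partition is more routine but still requires careful bookkeeping of how rationality propagates through the dynamics.
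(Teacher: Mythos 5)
There is a genuine gap, and you have in fact flagged it yourself: the step that converts the global rank argument of Proposition \ref{prop1} into a bound on the \emph{increment} $p_k(n+1)-p_k(n)$ is precisely the missing idea, and the ``localized identity'' you propose does not supply it. If you subtract your two averaged identities, the difference on each level-$n$ cylinder is $\int_{\Do_w}\bigl(N_{n+1}-N_n\bigr)\,d\lambda=\int_{T^{n}(\Do_w)}\bold{n}\,d\lambda$, and since the images $T^{n}(\Do_w)$, $|w|=n$, tile $\Do$ up to a null set, the sum collapses to $\int_{\Do}\bold{n}\,d\lambda$; you simply recover the original relation $0=\bold{a}+\sum_i A_i\bold{r}_i$ and learn nothing about which cells split. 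The paper's proof rests on a different, essential ingredient that your outline lacks: the $n$-th Rauzy graph, whose vertices are the length-$n$ cylinders and whose edges are the length-$(n+1)$ cylinders. Because $T$ and $T^{-1}$ preserve Lebesgue measure, the assignment sending each edge to the measure of the corresponding $(n+1)$-cylinder satisfies the Kirchhoff condition \eqref{equgraph} at every vertex, hence lies in the cycle space of this connected graph, whose dimension is $\chi=p_k(n+1)-p_k(n)+1$ by Theorem \ref{thm:graph}. This parametrizes \emph{all} level-$(n+1)$ measures by $\chi$ real numbers; substituting into $0=\bold{a}+\sum_i A_i^{(1)}\bold{r}_i$ writes $\bold{a}$ as a combination of $\chi$ rational vectors with these coefficients, and Lemma \ref{lem} together with minimality bounds $\chi$ from below, which is exactly the increment bound. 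Nothing in your plan produces a comparable parametrization, and without it the ``$\QQ$-rank argument applied to the new cells'' remains a hope rather than a proof.

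A secondary problem: your preparatory claim that $(T^{n+1},\mathcal{Q}_{n+1})$ is a piecewise translation in the paper's sense is not justified. Condition \eqref{eq:defri} is assumed only for the original pieces $\Do_i$; since $\bold{n}$ may take infinitely many values on a single piece, its average over a sub-cylinder $\Do_w\subset\Do_i$ need not be a rational multiple of $\lambda(\Do_w)$, and your decomposition of $T^{j}(\Do_w)$ produces subsets of cylinders, not unions of the original pieces, so the rationality does not propagate as claimed. The paper sidesteps this entirely: it never needs rationality at the refined level, only the original identity \eqref{eq1} combined with the cycle-space parametrization. And, as you yourself note, even granting this step, Proposition \ref{prop1} applied to the iterate only yields the static bound $p_k(n+1)\geq k+1$, which is strictly weaker than the statement to be proved.
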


The lower bound on $p_k(n)$ in Theorem \ref{thm} is optimal, see Section \ref{sec:example} for examples.

%%%%%%%%%%%%%%%%%%%%%%%%%%%%%%%%%%%%%%%%%%%%%%%%%%%%%%%%%%%%%%%%%%%%%%%%%%%%%%%%%%%%%%%%%%%%%%%%%%%%%%
\section{Background on Graph theory}\label{sec:graph}

Before the proof of the result we give some background on Graph theory. This part will be used in the proof of Proposition \ref{prop2}. The reference for this section is \cite{Boll.98}.

Let $G$ be an oriented graph with $n$ vertices $\mathsf{V}=(\mathsf{v}_1,\ldots,\mathsf{v}_n)$ and $m$ edges $\mathsf{E}=(\mathsf{e}_1,\ldots,\mathsf{e}_m)$, we assume that $E$ is a subset of $V\times V$.
Let $C_1(G)$ be the vector space generated by maps from edges to $\mathbb{R}$. 

We will be interested by the functions $f\in C_1(G)$ such that in each vertex $\mathsf{v}$,
\begin{equation}\label{equgraph}
\sum \limits_{\mathsf{v}' \text{such that } \mathsf{vv}' \in \mathsf{E} } f(\mathsf{vv}') = \sum \limits_{\mathsf{v}' \text{such that } \mathsf{v'v} \in \mathsf{E} } f(\mathsf{v'v}).
\end{equation}

These functions generate a vector space denoted by $N(G)$. 

An oriented cycle $\mathsf{L}$ is defined by $k$ vertices $\mathsf{u}_1,\ldots,\mathsf{u}_k$ in $\mathsf{V}$ such that $\mathsf{u}_1=\mathsf{u}_k$ and for each $1\leq i\leq k-1$, $\mathsf{u}_i\mathsf{u}_{i+1}$ or $\mathsf{u}_{i+1}\mathsf{u}_i$ is an edge of the graph. We write $\mathsf{L}=\mathsf{u}_1\cdots \mathsf{u}_k$.
 
Let $\mathsf{L}=\mathsf{u}_1\cdots \mathsf{u}_k$ be an oriented cycle in $G$, then we define a function $z_{\mathsf L}$ on $E$ by
$$
z_{\mathsf L}({\mathsf e})= \begin{cases}
1 \text{ if there exist $j\in \{1,\ldots,k-1\}$ such that }  \mathsf{e} = \mathsf{u}_{j} \mathsf{u}_{j+1} \\ 
-1 \text{ if there exist $j\in \{1,\ldots,k-1\}$ such that }  \mathsf{e} = \mathsf{u}_{j+1} \mathsf{u}_{j}\\ 
0\quad \text{otherwise} 
\end{cases}
$$
The functions $z_{\mathsf{L}}$ is called a {\it cycle vector} and we denote by $Z(G)$ the space generated by $z_{\mathsf{L}}$ when $\mathsf{L}$ runs over cycles of $G$. It is a subspace of $N(G)$.

\begin{theorem}\label{thm:graph}
Let $G$ be a connected graph with $n$ vertices and $m$ edges. 
The space $Z(G)$ is equal to $N(G)$, and
the dimension of the vector space $Z(G)$ is given by:
 $$dim(Z(G))=m-n+1.$$
\end{theorem}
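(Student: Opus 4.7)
The plan is to prove the two assertions in parallel by a double-inclusion dimension argument. First I would show $Z(G)\subseteq N(G)$ and compute $\dim N(G)=m-n+1$ directly via the boundary operator; then exhibit $m-n+1$ explicit linearly independent cycle vectors coming from a spanning tree.

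For the inclusion $Z(G)\subseteq N(G)$, let $\mathsf{L}=\mathsf{u}_1\cdots \mathsf{u}_k$ be an oriented cycle and fix a vertex $\mathsf{v}$. Each index $j$ with $\mathsf{u}_j=\mathsf{v}$ contributes exactly one incoming and one outgoing term to the sums in \eqref{equgraph}, with signs (given by the definition of $z_{\mathsf{L}}$) that cancel. Summing over all such $j$ shows that $z_{\mathsf{L}}$ satisfies \eqref{equgraph}, hence $z_{\mathsf{L}}\in N(G)$.

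Next, introduce the boundary operator $\partial:C_1(G)\to \mathbb{R}^{\mathsf V}$ sending $f$ to the function
$$\partial f(\mathsf{v}) = \sum_{\mathsf{v}':\,\mathsf{vv}'\in\mathsf{E}} f(\mathsf{vv}')\;-\;\sum_{\mathsf{v}':\,\mathsf{v'v}\in\mathsf{E}} f(\mathsf{v'v}).$$
By definition $N(G)=\ker\partial$. Since each edge $\mathsf{e}=\mathsf{vv}'$ contributes $+1$ to $\partial(\mathsf{e})(\mathsf{v})$ and $-1$ to $\partial(\mathsf{e})(\mathsf{v}')$, the image of $\partial$ lies in the hyperplane $H=\{x\in\mathbb{R}^{\mathsf V}:\sum_{\mathsf v} x_{\mathsf v}=0\}$, of dimension $n-1$. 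Using that $G$ is connected, I would show $\operatorname{Im}\partial = H$: for any two vertices $\mathsf u,\mathsf w$, the boundary of the edge-function supported on an (un)directed path from $\mathsf u$ to $\mathsf w$ in $G$ (with appropriate signs) is $\mathbf{1}_{\mathsf w}-\mathbf{1}_{\mathsf u}$, and these differences span $H$. The rank-nullity theorem then gives $\dim N(G)=m-(n-1)=m-n+1$.

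Finally, I would produce a basis of $Z(G)$ of size $m-n+1$. Pick a spanning tree $T\subseteq G$ with $n-1$ edges; call the remaining $m-n+1$ edges the chords. For each chord $\mathsf{e}$, there is a unique simple path in $T$ joining its endpoints, and together with $\mathsf{e}$ this path defines an oriented cycle $\mathsf{L}_{\mathsf{e}}$ in $G$. The cycle vectors $\{z_{\mathsf{L}_{\mathsf{e}}}\}$ are linearly independent because $z_{\mathsf{L}_{\mathsf{e}}}$ takes the value $\pm 1$ on the chord $\mathsf{e}$ and vanishes on every other chord, so any linear dependence forced to be trivial by restriction to the chord coordinates. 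Consequently $\dim Z(G)\ge m-n+1 = \dim N(G)$, and combined with $Z(G)\subseteq N(G)$ this yields both $Z(G)=N(G)$ and $\dim Z(G)=m-n+1$.

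The main obstacle is the surjectivity step $\operatorname{Im}\partial=H$, which is where connectedness of $G$ is really used; the rest is bookkeeping. The spanning tree construction also tacitly depends on connectedness, so I would emphasize its use in both halves of the proof.
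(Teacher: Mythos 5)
Your proof is correct: the inclusion $Z(G)\subseteq N(G)$ via the sign cancellation at each visited vertex, the computation $\dim N(G)=m-n+1$ by rank--nullity for the boundary operator (with connectedness giving surjectivity onto the sum-zero hyperplane), and the spanning-tree chord cycles giving $m-n+1$ independent elements of $Z(G)$ together yield both claims. The paper does not prove this theorem but refers to Bollob\'as, and your argument is essentially that standard proof -- indeed the chord cycles you construct are exactly the ``fundamental cycles'' the paper names right after the statement.
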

The elements of a basis of $Z(G)$ are called {\it fundamental cycles}.
We refer to \cite{Boll.98} for a proof of this result.

\begin{example}
Consider the following directed graph. The space $Z(G)$ is of dimension $6-4+1=3$. 
To the cycle $132$ is associated the function $z_{132}$ represented by $e_6-e_2-e_1$.
For example, an element of $N(G)$ is the function $f$ given by
$f(e)=\begin{cases}
1\quad e=e_1\\
0\quad e=e_2\\
1\quad e=e_3\\
2\quad e=e_4\\
1\quad e=e_5\\
1\quad e=e_6\\
\end{cases}$ .
For example, by looking at vertex $1$ we have: $2=1+1$.
\begin{center}
\begin{tikzpicture}[scale=1]

\node[below] (A) at(0,0){$1$};
\node[below] (B) at(2,0){$2$};
\node[above] (C) at(2,2){$3$};
\node[above] (D) at(0,2){$4$};

\path (A) edge[bend right,->] (B);
\path (B) edge[bend right,->] (C);
\path (C) edge[bend right,->] (D);
\path (D) edge[bend right,->] (A);

\draw[->] (0,0)--(2,2);
\draw [->] (2,0)--(0,2);

\node[below] at(1,-1){$e_1$};
\node[right] at(2.5,1){$e_2$};
\node[above] at(1,2.5){$e_3$};
\node[left] at(-.5,1){$e_4$};
\node[below] at(1.5,1.5){$e_6$};
\node[above] at(.5,1.5){$e_5$};

\end{tikzpicture}
\end{center}
\end{example}
%%%%%%%%%%%%%%%%%%%%%%%%%%%%%%%%%%%%%%%%%%%%%%%%%%%%%%%%%%%%%%%%%%%%%%%%%%%%%%%%%%%%%%%%%%%%%%%%%%%%%%
\section{Proofs of Proposition \ref{prop1} and Proposition \ref{prop2}}

\subsection{A first lemma}
\begin{lemma} \label{lem}
Let $k,m\geq 1$ be two integers, let $\bold{a}=\begin{pmatrix}a_1\\\vdots\\ a_k\end{pmatrix}$ be a vector of $\RR^k$,  $(\bold{n}_i)_{1\leq i \leq m}$ $m$ vectors of $\QQ^k$ and let $\alpha_1,\ldots\alpha_m$ be $m$ real numbers such that 
$$\bold{a} = \alpha_1\bold{n}_1 + \cdots \alpha_m \bold{n}_m.$$ 
Assume $m< k$,  then there exists $k$ rational numbers $q_1,\ldots,q_k$, non all equal to zero, such that 
$$a_1q_1+\cdots a_k q_k=0.$$
\end{lemma}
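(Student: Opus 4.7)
The plan is to exploit the fact that $\bold{a}$ lies in the real span of the rational vectors $\bold{n}_1,\ldots,\bold{n}_m$, and then show that this span is orthogonal to a nonzero rational vector, which will furnish the required $q_i$'s.

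First I would set $W = \mathrm{span}_\RR(\bold{n}_1,\ldots,\bold{n}_m) \subset \RR^k$. By hypothesis $\bold{a}\in W$. Next I would look at the (rational) linear system in the unknowns $q_1,\ldots,q_k$:
$$ q_1 (\bold{n}_i)_1 + \cdots + q_k (\bold{n}_i)_k = 0, \qquad i=1,\ldots,m. $$
This is a system of $m$ linear equations in $k$ unknowns with rational coefficients. Since we assumed $m<k$, standard linear algebra over $\QQ$ guarantees a nonzero rational solution $(q_1,\ldots,q_k)\in \QQ^k$.

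Having chosen such a nonzero $\bold{q}\in\QQ^k$ perpendicular to every $\bold{n}_i$, the conclusion follows immediately from the decomposition $\bold{a}=\alpha_1\bold{n}_1+\cdots+\alpha_m\bold{n}_m$, because
$$ a_1 q_1+\cdots+a_k q_k \;=\; \bold{q}\cdot \bold{a} \;=\; \sum_{i=1}^{m} \alpha_i\, (\bold{q}\cdot \bold{n}_i) \;=\; 0. $$
I do not anticipate any real obstacle here; the only subtle point worth flagging is that one must solve the orthogonality system over $\QQ$ rather than over $\RR$, which is legitimate precisely because the $\bold{n}_i$ have rational entries, so the coefficient matrix has rational rank equal to its real rank and the nullspace is still of dimension at least $k-m\geq 1$ over $\QQ$.
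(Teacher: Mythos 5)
Your proof is correct, and it takes a genuinely different route from the paper. You argue directly: since the $\bold{n}_i$ have rational entries and $m<k$, the homogeneous system $\bold{q}\cdot\bold{n}_i=0$, $i=1,\ldots,m$, is an underdetermined rational linear system and therefore admits a nonzero solution $\bold{q}\in\QQ^k$; orthogonality to each $\bold{n}_i$ then kills $\bold{a}=\sum_i\alpha_i\bold{n}_i$ regardless of the (real) coefficients $\alpha_i$. The paper instead proceeds by induction on $k$: it isolates a nonzero coordinate $n_1(1)$, eliminates $\alpha_1$ by forming the combinations $n_1(1)a_i-n_1(i)a_1$ (expressed via $2\times 2$ determinants of the rational vectors), applies the inductive hypothesis to this shorter relation, and then reconstructs $q_1$ from the other $q_i$. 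Your argument is shorter, avoids the case analysis and the bookkeeping of the elimination step, and makes the role of rationality transparent (nontrivial solvability over $\QQ$ of an underdetermined rational homogeneous system, the rank over $\QQ$ being the same as over $\RR$); the paper's induction is more hands-on but establishes nothing extra. Both arguments prove exactly the stated lemma.
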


\begin{proof}
We prove the result by induction on $k$. 
For $k=2$, we have $m=1$ and the hypothesis gives:

\[
\begin{pmatrix} a_1 \\ a_2 \end{pmatrix} = \alpha_1 \begin{pmatrix} {n}_1(1) \\ {n}_1(2) \end{pmatrix} .
\]
Either ${n}_1(1)$ or ${n}_1(2)$ are non zero numbers and we obtain $a_2 {n}_1(1)- a_1  {n}_1(2)=0$, or ${n}_1(1)={n}_1(2)=0$. Both cases give $\bold{a}=\bold{0}$ and the result is proved.

Assume the result is true until $k\geq2$. The case $m=1$ is similar to the preceding case: it suffices to consider the two first coordinates. Now assume $m>1$, for $1\leq i \leq k$, the $i$-coordinate of the vectors gives: 
\[
a_i = \alpha_1n_1(i) + \cdots \alpha_m n_m(i).
\]

Up to permutation, we can assume that $n_1(1)$ is non zero (if either $\bold{a}=\bold{0}$ or if there exists $i$ such that $a_i=0$ the result is obvious).
By linear combination we have for some $2\leq i \leq k$ :
\[
n_1(1) a_i - n_1(i) a_1= \alpha_2  \left| \begin{array}{cc} n_1(1)&n_2(1) \\n_1(i)&n_2(i)\\ \end{array}\right|+ \cdots +\alpha_m \left| \begin{array}{cc} n_m(1)&n_m(1) \\n_1(i)&n_2(i)\\ \end{array}\right|.
\]

By induction hypothesis, there exist $(k-1)$ rational numbers not all equal to zero $q_2,\ldots q_{k}$ such that 
\[
q_2\big{(} n_1(1) a_2 - n_1(2) a_1 \big{)} + \cdots+ q_k \big{(} n_1(1) a_k - n_1(k) a_1 \big{)} = 0
\]

Let us define $q_1 =  -\frac{q_2n_1(2)+\cdots+q_m n_1(m)}{n_1(1)}$.
Then previous equation can be written as
\[
q_1 a_1 + q_2 a_2+ \cdots q_k a_k=0. 
\]

The induction hypothesis is proven.

\end{proof}

%%%%%%%%%%%%%%%%%%%%%%%%%%%%%%%%%%%%%%%%%%%%%%%%%%%%%%%%%%%%%%%%
\subsection{Proof of Proposition \ref{prop1}} \label{subse1}
Let $k\geq 1$ be an integer, $\bold{a}$ a vector of $\RR^k$ such that the translation by $\bold{a}$ on the torus $\TT^k$ is minimal. Let $(T,\Do_1,\ldots,\Do_m)$ be a piecewise translation associated to this translation. The proof is made by contradiction. Assume $m\leq k$.
For every integer $i\in\{1,\ldots,m\}$, let us denote $A_i=\lambda(D_i)$ the volume of $\Do_i$. 
For a function $\bold{f}=(f_1,\ldots,f_k)\in$ L$^1_\lambda(\Do,\RR^k)$, Birkhoff's theorem can be applied to the ergodic map $T$ and gives for almost every point $x$:
$$\lim_{N\to +\infty}\frac{1}{N}\displaystyle\sum_{k=0}^{N-1} \bold{f} \left(T^k \bold{x}\right)=\left( \int_{\Do} f_1 (\bold{x})d\lambda(\bold{x}) , \ldots , \int_{\Do} f_k (\bold{x})d\lambda(\bold{x}) \right).$$
We say that $x$ is \textit{generic} for $f$ if the formula is true for $x$.
Moreover we say that $x$ is \textit{recurrent} if for a neighborhood $V$ of $x$, there exists an integer sequence $(N_p)_{p\in\mathbb{N}}$ such that $T^{N_p}(x)$ belongs to $V$.
Now let $x$ be a recurrent and generic for the functions $\bold{n} \cdot 1_{\Do_i}$, for $i\in \{1,\ldots,m\}$ (the measure of such points is one). 
Let $N$ be an integer, we obtain 
\begin{equation} \label{eq:iteration}
T^N (\bold{x}) = \bold{x}+N\bold{a}+\sum_{k=0}^{N-1}\bold{n}\left(T^k \bold{x}\right)=\bold{x}+N\bold{a}+\sum_{i=1}^m\sum_{k=0}^{N-1}\bold{n}\left(T^k \bold{x}\right)1_{\Do_i}\left(T^k\bold{x}\right). 
\end{equation}
Since $\bold{x}$ is a recurrent point for $T$, there exists an integer sequence $(N_p)_{p\in \NN}$ such that
$T^{N_p}(\bold{x})/N_p$ converges to zero when $p$ tend to infinity. By Birkhoff theorem we obtain in Equation \eqref{eq:iteration} :
$$\frac{T^{N_p}(\bold{x})}{N_p} =\frac{\bold{x}}{N_p}+\bold{a}+\sum_{i=1}^m\frac{1}{N_p}\sum_{k=0}^{N_p-1}\bold{n}\left(T^k \bold{x}\right)1_{\Do_i}\left(T^k\bold{x}\right). $$
And when $N_p$ tends to infinity, we get:
\begin{equation} \label{eq1}
0 = \bold{a} + A_1 \bold{r}_1+ \cdots + A_m  \bold{r}_m,
\end{equation}
where the vectors $\bold{r}_i$ are defined in \eqref{eq:defri}. 
Since the set $\Do$ is of volume $1$ we have also:
\begin{equation} \label{eq2}
1=A_1+\cdots+A_m.
\end{equation}

Thus Equation \eqref{eq1} can be written 
$0 = \bold{a}' + A_2  \bold{r}_1'+ \cdots + A_m  \bold{r}_m',$
where $\bold{r}_i'=\bold{r}_i-\bold{r}_1$ and $\bold{a}'=\bold{a}+\bold{r}_1$. By Lemma \ref{lem} we deduce that if $m\leq k$, the translation by vector $\bold{a}'$ can not be a minimal translation, thus the translation by vector $\bold{a}$ can not be a minimal translation, this is a contradiction. 

%%%%%%%%%%%%%%%%%%%%%%%%%%%%%%%%%%%%%%%%%%%%%%%%%%%%%%%%%%%%%%%%
\subsection{Proof of Proposition \ref{prop2}} \label{subse2}

Let $(T,\Do_1,\ldots,\Do_{p_k(1)})$ be a piecewise translation associated to the minimal translation by $\bold{a}$ on the torus $\TT^k$ and let $n$ be an integer.
To each step $n\geq 1$, the $(n-1)$-th refinement of the initial partition consists of $p_k(n)$ domains denoted by $\Do_1^{(n)},\ldots,\Do_{p_k(n)}^{(n)}$. The measures of domains are respectively  denoted by $A_1^{(n)},\ldots,A_{p_k(n)}^{(n)}$. We write $\Po^{(n)}$ the partition generated by the domains $\Do_1^{(n)},\ldots,\Do_{p_k(n)}^{(n)}$ .

We fix an integer $n\geq 2$ until the end of  the proof.

We define a graph $\G$ as follows :
\begin{itemize}
\item To each domain $\Do_i^{(n)}$, $i\in \{1,\ldots,p_k(n)\}$, is associated a vertex. 
\item For each integer $\ell\in \{1,\ldots,p_k(n+1)\}$, we define an oriented edge $\mathsf{a}_\ell^{(n)}$ from the vertex $\Do_i^{(n)}$ to the vertex $\Do_j^{(n)}$ if there exists two integers $i$ and $j$ in $\{1,\ldots,p_k(n)\}$ such that 
	\[
	\Do_\ell^{(n+1)} \subset \Do_i^{(n)} \mbox{ and } T^{-1}\big{(}\Do_\ell^{(n+1)}\big{)}\subset \Do_j^{(n)}.
	\]
	
\end{itemize}

This graph is called the $n$-th {\it Rauzy Graph} associated to the language of the piecewise translation, see Section \ref{rex}.
By assumption, each domain has a non zero Lebesgue measure, and Lebesgue measure is ergodic for the translation, thus the graph $\G$ is connected. Consider the function $f\in C_1(\G)$ which associates to any edge $\mathsf{a}_\ell^{(n)}$ the value $ A_l^{(n+1)}$. The applications $T$ and $T^{-1}$ are Lebesgue measure preserving transformations, so the function $f$ is in the space $N(\G)$, thus in the space $Z(\G)$ by Theorem \ref{thm:graph}. 
We deduce that there exists $\chi=p_k(n+1)-p_k(n)+1$ fundamental cycles in this graph such that the function $f\in Z(G_n)$ is a linear combination of fundamental cycles.
Thus there exists some real numbers $\alpha_1,\ldots,\alpha_\chi$ such that for every integer $\ell$ there exists a subset  $\II_\ell$  of $\{1,\ldots,\chi\}$ such that
\begin{equation} \label{eq3}
A_\ell^{(n+1)} = \sum _{i\in \II_\ell} \alpha_i.
\end{equation}

The partition $\Po^{(n+1)}$ is a refinement of the initial partition $\Po^{(1)}$. So for any integer $\ell \in \{1,\ldots,p(1)\}$, there exists a subset $\JJ_\ell$ of  $\{1,\ldots,p_k(n+1)\}$ such that
\begin{equation} \label{eq4}
\Do_\ell = \bigcup \limits_{m \in \JJ_\ell} \Do_m^{(n+1)} \mbox{ , and so } A_\ell^{(1)} =  \sum _{m \in \JJ_\ell} A_{m}^{(n+1)}.
\end{equation}

Using Relations \eqref{eq3} and \eqref{eq4} in Equation \eqref{eq1}, we find :
\[
\begin{array}{llll}
0 &= \bold{a} + A_1^{(1)} \bold{r}_1+ \cdots +  A_{p(n)} ^{(1)}  \bold{r}_{p(1)} \\
 &= \bold{a} + \left( \sum \limits_{m \in \JJ_1} A_{m}^{(n+1)} \right)  \bold{r}_1+ \cdots + \left( \sum \limits_{m \in \JJ_{p(n+1)}} A_{m}^{(n+1)} \right)  \bold{r}_{p(1)} \\
 &= \bold{a} + \left( \sum \limits_{m \in \JJ_1}   \sum \limits_{i\in \II_m} \alpha_i \right) \bold{r}_1+ \cdots + \left( \sum \limits_{m \in \JJ_{p(n+1)}}   \sum \limits_{i\in \II_m} \alpha_i \right) \bold{r}_{p(1)}.
\end{array}
\]
We can reorganize this sum to get the relation:
\[
0 = \bold{a} +  \alpha_1\cdot \bold{s}_1+ \cdots +  \alpha_\chi \cdot \bold{s}_m,
\]
where each vector $\bold{s}_i$ is a linear combination with integer coefficients of  some vectors $\bold{r}_j$, and so is a vector of $\QQ^k$.
By Lemma \ref{lem},  we conclude that $\chi \geq k-1$, and so $p_k(n+1)-p_k(n)\geq k$. 

\subsection{Proof of Theorem \ref{thm}}
Proposition \ref{prop1} shows that $p_k(1)\geq k+1$. Now Proposition \ref{prop2} shows that for every integer $n$ we have $p_k(n+1)-p_k(n)\geq k$. By induction we deduce that 
$$p_k(n)\geq kn+1.$$

\subsection{Example}
The proof of Proposition \ref{prop2} can be illustrated by the following example:

\begin{figure}[h]
\begin{center}
\begin{tikzpicture}[scale=1]
\node  at(4.1,1){$\mathcal{D}_1^{(1)}$};
\node  at(6,1){$\mathcal{D}_2^{(1)}$};
\node  at(4.4,-1){$\mathcal{D}_3^{(1)}$};

\draw (2,-1)--(4,2)--(6,3)--(8,2)--(6,-1)--(4,-2)--cycle;
\draw (2,-1)--(4,0)--(6,3);
\draw (4,0)--(6,-1);

\draw (9,-1)--(11,2)--(13,3)--(15,2)--(13,-1)--(11,-2)--cycle;

\draw[dashed] (9,-1)--(11,0)--(13,3);
\draw[dashed]  (11,0)--(13,-1);
\draw[dashed]  (11,0)--(10,-1.5);
\draw[dashed]  (11,0)--(10,0.5);
\draw[dashed]  (11,0)--(13,1)--(14,2.5);
\draw[dashed]  (13,1)--(14,0.5);
\node at (11.3,1.3) {$\mathcal{D}_1^{(2)}$};
\node at (12.8,1.7) {$\mathcal{D}_2^{(2)}$};
\node at (14.1,1.5) {$\mathcal{D}_3^{(2)}$};
\node at (12.8,0) {$\mathcal{D}_4^{(2)}$};
\node at (11.5,-1) {$\mathcal{D}_5^{(2)}$};
\node at (9.9,-0.9) {$\mathcal{D}_6^{(2)}$};
\node at (10.2,-0.05) {$\mathcal{D}_7^{(2)}$};
\end{tikzpicture}
\end{center}
\end{figure}

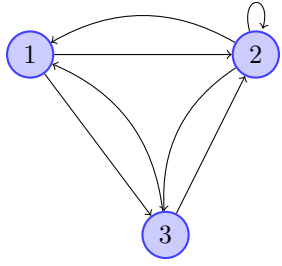
\begin{figure}[h]
\begin{center}
\begin{tikzpicture}[scale=0.6]
\tikzstyle{place}=[circle,thick,draw=blue!75,fill=blue!20,minimum
                      size=6mm]

\node[place] (A) at(5,-5){$1$};
\node[place] (B) at(10,-5){$2$};
\node[place] (C) at(8,-9){$3$};

\path (A) edge[->] (B);
\path (B) edge[bend right,->] (A);
\path (A) edge[->] (C);
\path (C) edge[bend right,->] (A);
\path (B) edge[bend right,->] (C);
\path (C) edge[->] (B);
\path (B) edge[loop above] (B);

\end{tikzpicture}
\caption{Example of the first Rauzy graph associated to this piecewise translation.}
\end{center}
\end{figure}

Consider the example described in Section \ref{sec:example}.
On the left the is the partition in three sets. Denote them by $\mathcal{D}_1^{(1)}$, $\mathcal{D}_2^{(1)}$ and $\mathcal{D}_{3}^{(1)}$. Consider points such that the image by the piecewise translation is in one of these sets. 
To understand them we need to consider the second refinement of the partition. We draw it on the right. It is made by $2^2+2+1=7$ pieces denoted $\mathcal{D}_1^{(2)},\ldots,\mathcal{D}_{7}^{(2)}$. Each set has an image included in a piece of the left partition. This allows us to build the first of Rauzy graph of this piecewise translation map.

%%%%%%%%%%%%%%%%%%%%%%%%%%%%%%%%%%%%%%%%%%%%%%%%%%%%%%%%%%%%%%%%%%%%%%%%%%%%%%%%%%%%%%%%%%%%%%%%%%%%%%
\subsection{Rauzy Graphs} \label{rex}

We have given a definition of the Rauzy graphs in Section \ref{subse2} with a "geometric" point of view but we can also introduce them with a purely combinatorial point of view. We refer to \cite{Che.Hub.Mess.01, Ca.97}. Cassaigne studies the graphs of $w_2$ in \cite{Ca.97}. Both definitions are similar. We can deduce from our result :

\begin{proposition}[Corollary of Theorem \ref{thm}]
The Rauzy graphs of the language of a piecewise translation associated to the minimal translation by $\bold{a}$ on the torus $\TT^k$ have an Euler characteristic at least $k$.
\end{proposition}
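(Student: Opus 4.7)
The plan is to read this corollary as a direct translation of Proposition \ref{prop2} into graph-theoretic language, after identifying the combinatorial and geometric versions of the Rauzy graph. Recall from Section \ref{subse2} that the geometric $n$-th Rauzy graph $\mathcal{G}^{(n)}$ has exactly one vertex per element of the refined partition $\mathcal{P}^{(n)}$ and one oriented edge per element of $\mathcal{P}^{(n+1)}$, so that
\[
|V(\mathcal{G}^{(n)})| = p_k(n), \qquad |E(\mathcal{G}^{(n)})| = p_k(n+1).
\]
The same counts hold for the combinatorial Rauzy graphs of the language of the coding (vertices are factors of length $n$, edges are factors of length $n+1$), and the two constructions are canonically isomorphic via the coding map $\mathsf{Cod}$, since elements of $\mathcal{P}^{(n)}$ are exactly the nonempty cylinders of length $n$.

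Next, I would note that $\mathcal{G}^{(n)}$ is connected. This was already used in the proof of Proposition \ref{prop2}: because Lebesgue measure is ergodic for the minimal translation and each piece of the refined partition has positive measure, every two vertices can be joined by a directed path via iterations of $T$. Hence Theorem \ref{thm:graph} applies and gives that the cyclomatic complexity (the first Betti number, which here plays the role of the Euler characteristic of the graph) equals
\[
\dim Z(\mathcal{G}^{(n)}) = |E(\mathcal{G}^{(n)})| - |V(\mathcal{G}^{(n)})| + 1 = p_k(n+1) - p_k(n) + 1.
\]

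Finally, Proposition \ref{prop2} gives $p_k(n+1) - p_k(n) \geq k$ for every $n \geq 1$, so this Euler characteristic is at least $k+1 \geq k$, which is the desired bound. I do not anticipate any serious obstacle: the only point worth checking carefully is the identification between the geometric graph defined in Section \ref{subse2} and the purely combinatorial Rauzy graph used in \cite{Che.Hub.Mess.01, Ca.97}, and that reduces to the observation that cylinders of length $n$ in the coding are in bijection with the factors of length $n$ of the language.
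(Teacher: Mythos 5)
Your argument is correct and is exactly the deduction the paper intends: identify the combinatorial Rauzy graph of the language with the geometric graph $\G$ of Section \ref{subse2} (vertices $=$ cylinders/factors of length $n$, edges $=$ those of length $n+1$), use connectedness and Theorem \ref{thm:graph} to compute the cycle rank $p_k(n+1)-p_k(n)+1$, and conclude with Proposition \ref{prop2}; the only caveat, which you already handle, is that ``Euler characteristic'' must be read as this cycle rank (or $E-V$), since the usual $V-E$ would be negative, and your bound $\geq k+1$ in fact exceeds the stated $k$.
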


\bibliographystyle{plain}
\bibliography{biblio-mino}

\end{document}